\def\seq#1#2#3{#1_{#2},\,\ldots,#1_{#3}}
\def\w{\widetilde}
\def\h{\widehat}
\def\vv{{\underline{v}}}
\def\nuv{{\underline{\nu}}}
\def\tt{{\underline{t}}}
\def\mm{\underline{m}}
\def\MM{\underline{M}}
\def\1{\underline{1}}
\def\P{\mathbb P}
\def\Z{\mathbb Z}
\def\Q{\mathbb Q}
\def\C{\mathbb C}
\def\OO{{\mathcal O}}
\newtheorem{theorem}{Theorem}
\newenvironment{definition}
{\smallskip\noindent{\bf Definition\/}:}{\smallskip\par}
\newenvironment{proof}
{\noindent{\bf Proof\/}.}{{ $\Box$}\smallskip\par}
\title{Equivariant Poincar\'e series and topology of valuations\footnote{Math. Subject Class. 14B05, 13A18,
14R20, 16W70.
Keywords: finite group actions, Poincar\'e series,  plane valuations, equivariant topology}
}
\author{
A.~Campillo,
\and F.~Delgado,\thanks{Supported by the grants
MTM2012-36917-C03-01 / 02
(both grants with the help of FEDER Program).} \and S.M.~Gusein-Zade
\thanks{
Supported by the
grants RFBR--13-01-00755, NSh--5138.2014.1.
} }
\date{}
\begin{document}
\def\eps{\varepsilon}

\maketitle

\begin{abstract}
The equivariant with respect to a finite group action Poincar\'e series of a collection
of $r$ valuations was defined earlier as a power series in $r$ variables with the coefficients
from a modification of the Burnside ring of the group. Here we show that (modulo simple exceptions)
the equivariant Poincar\'e series determines the equivariant topology of the collection
of valuations.
\end{abstract}

\section{Introduction}\label{sec1}

A definition of the Poincar\'e series of a multi-index filtration was first given in \cite{CDK}
(for filtrations defined by collections of valuations). It is a formal power series in several variables
with integer coefficients, i.e., an element of the ring $\Z[[t_1, \ldots, t_r]]$.
In \cite{Duke} it was shown that, for
the filtration defined by the curve valuations corresponding to the irreducible components of a
plane curve singularity, the Poincar\'e series coincides with the Alexander polynomial in several
variables of the corresponding algebraic link: the intersection of the curve with a small sphere
in $\C^2$ centred at the origin. This relation was obtained by a direct computation of the both
sides in the same terms. Up to now there exist no conceptual proof of it.
The Alexander polynomial in several variables of an algebraic link (and therefore the Poincar\'e series
of the corresponding collection of valuations) determines the topological type of the corresponding plane
curve singularity.
 In \cite{IJM} the definition of the Poincar\'e series was reformulated in terms of an integral
with respect to the Euler characteristics (over an infinite dimensional space).

The desire to understand deeper this relation led to attempts to find an equivariant version of it
(for actions of a finite group $G$)
and thus to define equivariant versions of the Poincar\'e series and of the Alexander polynomial.
Some equivariant versions of the monodromy zeta-function (that is of the Alexander polynomial
in one variable) were defined in \cite{GLM1} and \cite{GLM2}. Equivariant versions of the
Poincar\'e series were defined in \cite{Equiv1}, \cite{Equiv2} and \cite{Equiv3}.

In some constructions of equivariant analogues of invariants (especially those related to the Euler
characteristic) the role of the ring of integers $\Z$ (where the Euler characteristic takes values)
is played by the Burnside ring $A(G)$ of the group $G$. Therefore it would be attractive to define
equivariant versions of the Poincar\'e series as elements of the ring $A(G)[[t_1, \ldots, t_r]]$
(or of a similar one). The equivariant versions of the monodromy zeta functions defined in \cite{GLM1}
and \cite{GLM2} are formal power series with the coefficients from $A(G)\otimes \Q$ and $A(G)$
respectively.

In \cite{Equiv1} the equivariant Poincar\'e series was defined as an element of the ring $R_1(G)[[t_1, \ldots, t_r]]$
of formal power series in $t_1$, \dots, $t_r$ with the coefficients from the subring $R_1(G)$ of
the ring $R(G)$ of complex representations of the group $G$ generated by the one-dimensional representations.
This Poincar\'e series turned out to be useful for some problems: see, e.g., \cite{FAA2}, \cite{Nem}.
However, it seems to be rather ``degenerate'', especially for non-abelian groups.

In \cite{Equiv2} the $G$-equivariant Poincar\'e series $P^G_{\{\nu_i\}}$ of a collection of valuations (or order
functions) $\{\nu_i\}$ was not in fact a series, but an element of the Grothendieck ring of so called locally
finite $(G,r)$-sets. This Grothendieck ring was rather big and complicated, the Poincar\'e series $P^G_{\{\nu_i\}}$
was rather complicated as well and contained a lot of information about the valuations and the $G$-action.
In particular, for curve and divisorial valuations on the ring  $\OO_{\C^2,0}$ of functions in two
variables the information contained in this
Poincar\'e series was (almost) sufficient to restore the action of $G$ on $\C^2$ and the $G$-equivariant
topology of the set of valuations: \cite{Ark}.

In \cite{Equiv3} the equivariant Poincar\'e series $P^G_{\{\nu_i\}}(t_1,\ldots, t_r)$ was defined as an element
of the ring $\widetilde{A}(G)[[t_1,\ldots, t_r]]$ of formal power series in the variables $t_1,\ldots, t_r$
with the coefficients from a certain modification $\widetilde{A}(G)$ of the Bunside ring $A(G)$ of the
group $G$.
A simple reduction of this Poincar\'e series is an element of the ring $A(G)[[t_1,\ldots, t_r]]$. Thus it is
somewhat close to the (``idealstic") model discussed above. However, in order to define the equivariant Poincar\'e series
of this form, it was necessary to lose quite a lot of information about the individual valuations from the
collection. (It is possible to say that one used averaging of the information over the group.) Thus it was not
clear how much information does it keep.

Here we discuss to which extend the $G$-equivariant Poincar\'e series from \cite{Equiv3} determines the topology
of a set of plane valuations. The answer is rather similar to the one in \cite{Ark}, however reasons for that
(and thus the proofs) turn out to be much more involved.

The $G$-equivariant Poincar\'e series $P^G_{\{\nu_i\}}$ considered in \cite{Equiv2} depends essentially on the
set of valuations defining the filtration. In particular, the substitution of one of them (say, $\nu_i$) by its
shift $a^*\nu_i$, $a\in G$, changes the $G$-equivariant Poincar\'e series $P^G_{\{\nu_i\}}$. The Poincar\'e series
$P^G_{\{\nu_i\}}(\tt)$ considered in \cite{Equiv3} depends not on the valuations $\nu_i$ themselves, but on
their $G$-orbits. The substitution of one of them by its shift does not change the $G$-equivariant
Poincar\'e series $P^G_{\{\nu_i\}}(\tt)$. Therefore this series cannot determine the $G$-topology of a
collection of divisorial and/or of curve valuations on $\OO_{\C^2,0}$ in the form defined in \cite{Ark}.
One has to modify this notion a little bit.

Assume first that we consider sets of curve valuations. Let $\{C_i\}_{i=1}^r$ and $\{C'_i\}_{i=1}^r$ be
two collections of branches (that is of irreducible plane curve singularities) in the complex plane $(\C^2,0)$
with an action of a finite group $G$. We shall say that these collections are {\em weakly $G$-topological
equivalent} if there exists a $G$-invariant germ of a homeomorphism $\psi:(\C^2,0)\to(\C^2,0)$ such that
for each $i=1, \ldots, r$ one has $\psi(C_i)=a_iC'_i$ with an element $a_i\in G$ (i.e if the image of the
$G$-orbit of the branch $C_i$ coincides with the $G$-orbit of the branch $C'_i$). To formualate an analogue
of this definition for collections of divisorial valuations, one can describe a divisorial valuation $\nu$
on $\OO_{\C^2,0}$ by a pair of curvettes intersecting the corresponding divisor (transversally) at different
points. Two collections of divisorial valuations $\{\nu_i\}_{i=1}^r$ and $\{\nu'_i\}_{i=1}^r$ described
by the corresponding collections of curvettes $\{L_{ij}\}^r_{i=1,j=1,2}$ and $\{L'_{ij}\}^r_{i=1,j=1,2}$
respectively are weakly $G$-topologically equivalent if there exists a $G$-invariant germ of a homeomorphism
$\psi:(\C^2,0)\to(\C^2,0)$ such that for each $i=1, \ldots, r$ one has $\psi(L_{ij})=a_iL'_{ij}$
for $j=1,2$ and an element $a_i\in G$.

One has an obvious analogue of Theorem~2.9 from \cite{Ark}. This means that, for a fixed representation
of the group $G$ on $\C^2$, the weak topology of a collection of curve or/and divisorial valuations on
$\OO_{\C^2, 0}$ is determined by the $G$-resolution graph $\Gamma^G$ of the collection (where not individual
branches or/and divisors, but their orbits are indicated) plus the correspondence between the tails of this
graph emerging from special points of the first component of the exceptional divisor with these special points
(see below).

\section{Equivariant Poincar\'e series}\label{sec2}

Let us briefly recall the definition of the $G$-equivariant Poincar\'e series
$P^G_{\{\nu_i\}}(t_1,\ldots, t_r)$
of a collection of order functions on the ring $\OO_{V,0}$ of germs of functions on $(V,0)$
and the equation for it in terms of a $G$-equivariant resolution of curve or/and divisorial plane
valuations which will be used here.

\begin{definition}
A finite {\it equipped} $G$-{\it set} is a pair $\w{X}=(X, \alpha)$ where:
\begin{enumerate}
\item[$\bullet$] $X$ is a finite $G$-set;
\item[$\bullet$] $\alpha$ associates to each point $x\in X$ a
one-dimensional representation
$\alpha_x$ of the isotropy subgroup $G_x=\{a\in G : ax=x\}$ of the point $x$ so that,  for
$a\in G$, one has $\alpha_{ax}(b)=\alpha_x(a^{-1}ba)$, where
$b\in G_{ax}=a G_x a^{-1}$.
\end{enumerate}
\end{definition}

Let $\w{A}(G)$ be the Grothendieck group of finite equipped $G$-sets.
The cartesian product defines a ring structure on it.
The class of an equipped $G$-set $\w X$ in the Grothendieck ring $\w{A}(G)$ will be
denoted by $[\widetilde{X}]$.
As an abelian group $\widetilde{A}(G)$ is freely generated by the classes of the
irreducible equipped $G$-sets $[G/H]_{\alpha}$ for all the conjugacy classes $[H]$
of subgroups of $G$ and for all one-dimensional representations $\alpha$ of $H$
(a representative of the conjugacy class $[H]\in \mbox{Conjsub\,}G$).

There is a natural homomorphism $\rho$ from the ring $\w{A}(G)$ to the Burnside rings $A(G)$ of the group $G$
defined by forgetting the one-dimensional representation corresponding to the points.
The reduction $\hat{\rho}:\w{A}(G)\to \Z$ is defined by forgetting the representations and the $G$-action.
There are natural pre-$\lambda$-structure on a rings $A(G)$ and $\w{A}(G)$
which give sense for the expressions of the form $(1-t)^{-[X]}$, $[X]\in A(G)$, and
$(1-t)^{-[\w{X}]}$, $[\w{X}]\in \w{A}(G)$ respectively:
see \cite{Equiv3}. Both $\rho$ and $\hat{\rho}$ are homomorphisms of pre-$\lambda$-rings.

Let $(V,0)$ be a germ of a complex analytic space
with an action of a finite group $G$ and let $\OO_{V,0}$ be the ring of germs of
functions on it. Without loss of generality we assume that the $G$-action on $(V,0)$
is faithful. The group $G$ acts on $\OO_{V,0}$ by $a^* f(z)=f(a^{-1}z)$ ($z\in V$, $a\in G$).
A valuation $\nu$ on the ring $\OO_{V,0}$
is a function $\nu: \OO_{V,0}\to \Z_{\ge 0}\cup \{+\infty\}$ such that:
\begin{enumerate}
\item[1)] $\nu(\lambda f) = \nu (f)$ for $\lambda\in \C^{*}$;
\item[2)] $\nu(f+g)\ge \min \{\nu(f), \nu(g)\}$;
\item[3)] $\nu(fg)= \nu(f)+\nu(g)$.
\end{enumerate}
A function $\nu:\OO_{V,0}\to \Z_{\ge 0}\cup \{+\infty\}$ which
possesses the properties 1)
and 2) is called an {\it order function}.

Let $\seq{\nu}1r$ be a collection of order functions
on
$\OO_{V,0}$.
It defines an $r$-index filtration on $\OO_{V,0}$:
$$
J(\vv) = \{h\in \OO_{V,0} : \nuv(h)\ge \vv\}\; ,
$$
where $\vv=(\seq v1r)\in \Z_{\ge 0}^{r}$,
$\nuv(h) = (\nu_1(h), \ldots, \nu_r(h))$ and
$\vv'=(\seq{v'}1r)\ge \vv'' =(\seq{v''}1r)$ if and only if
$v'_i\ge v''_i$ for all $i$.

Let $\omega_i:\OO_{V,0}\to \Z_{\ge 0}\cup
\{+\infty\}$ be defined by $\omega_i=\sum_{a\in G}a^* \nu_i$. The
functions $\omega_i$ are $G$-invariant (they are not, in general, order functions).
For an element $h\in\P\OO_{V,0}$, that is
for a function germ considered up to a constant factor, let $G_h$ be the isotropy
subgroup $G_h=\{a\in G: a^*h=\alpha_h(a)h\}$
and let $Gh\cong G/G_h$ be the orbit
of $h$ in $\P\OO_{V,0}$.
The correspondence $a\mapsto \alpha_h(a)\in\C^*$ determines
a one-dimensional representation $\alpha_{h}$ of the subgroup $G_h$.
Let $\widetilde{X}_h = [G/G_{h}]_{\alpha_h}$ be the element of
the ring $\widetilde{A}(G)$
represented by the $G$-set $Gh$ with the representation
$\alpha_{a^*h}$ associated to the point $a^*h\in Gh$ ($a\in G$).
The correspondence $h\mapsto \widetilde{X}_h$ defines a function
($\widetilde{X}$) on $\P\OO_{V,0}/G$ with values in $\widetilde{A}(G)$.
The {\it equivariant Poincar\'e series} $P^G_{\{\nu_i\}}(\tt)$ of the collection $\{\nu_i\}$ is defined by the equation
 \begin{equation}\label{main_definition}
 P^G_{\{\nu_i\}}(\tt)=\int_{\P\OO_{V,0}/G}
\widetilde{X}_h\tt^{\underline{\omega}(h)} d\chi\in
 \widetilde{A}(G)[[t_1, \ldots, t_r]]\; ,
 \end{equation}
where $\tt:= (\seq t1r)$,
$\tt^{\underline{\omega}(h)}=t_1^{\omega_1(h)}\cdot\ldots\cdot t_r^{\omega_r(h)}$, $t_i^{+\infty}$ should be regarded as 0.
The precise meaning of this integral see in \cite{Equiv3}.

Applying the reduction homomorphism $\rho:\widetilde{A}(G)\to A(G)$ to the Poincar\'e
series $P^G_{\{\nu_i\}}(\tt)$, i.e. to its coefficients, one gets the series
$\rho P^G_{\{\nu_i\}}(\tt)\in A(G)[[t_1, \ldots, t_r]]$, i.e. a power series
with the coefficients from the (usual) Burnside ring.
Applying the homomorphism $\widehat{\rho}:\widetilde{A}(G)\to \Z$
one gets the series $\widehat{\rho} P^G_{\{\nu_i\}}(\tt)\in \Z[[t_1, \ldots, t_r]]$.
 One has
$$
\widehat{\rho} P^G_{\{\nu_i\}}(\tt)=P_{\{a^*\nu_i\}}(t_1, \ldots, t_1, t_2, \ldots,
t_2,
\ldots, t_r, \ldots, t_r)\,,
 $$
 where $P_{\{a^*\nu_i\}}(\bullet)$ is the usual (non-equivariant) Poincar\'e series of the collection
 of $\vert G\vert r$ order functions $\{a^*\nu_1, a^*\nu_2,\ldots, a^*\nu_r\vert a\in G\}$
(each group of equal variables in $P_{\{a^*\nu_i\}}$ consists of $\vert G\vert$ of them).

Now assume that a finite group $G$ acts linearly on $(\C^2,0)$ and let $\nu_i$, $i=1, \ldots, r$,
be either a curve or a divisorial valuation on $\OO_{\C^2,0}$.
We shall write
$I_0=\{1,2,\ldots,r\}=I'\sqcup I''$, where $i\in I'$ if and only if the corresponding valuation
$\nu_i$ is a curve one. For $i\in I'$, let $(C_i,0)$ be the
plane curve defining the valuation $\nu_i$.

A $G$-{\it equivariant resolution} (or a $G$-{\it resolution} for short) of
the collection $\{\nu_i\}$ of valuations is a proper complex
analytic map $\pi: ({\cal X, D})\to (\C^2,0)$ from a
smooth surface ${\cal X}$ with a $G$-action such that:
\begin{enumerate}
\item[1)]
$\pi$ is an isomorphism outside of the origin in $\C^2$;
\item[2)]
$\pi$ commutes with the $G$-actions on ${\cal X}$ and on $\C^2$;
\item[3)]
the total transform $\pi^{-1}(\bigcup\limits_{i\in I',\, a\in G}
aC_i)$ of the curve $GC=G(\bigcup\limits_{i\in I'} C_i) $
is a normal crossing divisor on ${\cal X}$ (in particular,
the exceptional
divisor ${\cal D}=\pi^{-1}(0)$ is a normal crossing divisor as
well);
\item[4)]
for each branch $C_i$, $i\in I'$, its strict transform
$\widetilde{C}_i$ is a germ of a smooth curve transversal to the
exceptional divisor ${\cal D}$ at a smooth point $x$ of
it and is invariant with respect to the isotropy subgroup
$G_x=\{g\in G: gx=x\}$ of the point $x$;
\item[5)] for each $i\in I''$, the exceptional divisor ${\cal D}=\pi^{-1}(0)$
contains the divisor defining the divisorial valuation $\nu_i$.
\end{enumerate}
A $G$-resolution can be obtained by a $G$-invariant sequence of blow-ups of points.

The action of the group $G$ on the first component of the exceptional divisor
can either be trivial (this may happen only if $G$ is cyclic) or have fixed points
of (proper) subgroups of $G$. (If $G$ is abelian, these are the fixed points of $G$ itself.)
These points are called {\em special}.

Let ${\stackrel{\circ}{\cal D}}$ be the ``smooth part" of
the exceptional divisor ${\cal D}$ in the total transform
$\pi^{-1}(GC)$ of the curve $GC$, i.e., ${\cal D}$ itself minus
all the intersection points of its components
and all the intersection points with the components of the strict transform of the curve $GC$.
For $x\in {\stackrel{\circ}{\cal D}}$, let $\w{L}_x$ be a germ
of a smooth curve on ${\cal X}$ transversal to
${\stackrel{\circ}{\cal D}}$ at the point $x$ and invariant
with respect to the isotropy subgroup $G_x$ of the point $x$.
The image $L_x=\pi(\w{L}_x)\subset (\C^2,0)$ is called a {\it
curvette} at the point $x$. Let the curvette $L_x$ be given by an
equation $h_x=0$, $h_x\in\OO_{\C^2,0}$. Without loss of generality
one can assume that the function germ $h_x$ is $G_x$-equivariant.
Moreover we shall assume that the germs $h_x$ associated to different
points $x\in {\stackrel{\circ}{\cal D}}$ are choosen so that
$h_{ax}(a^{-1}z)/h_x(z)$ is a constant (depending on $a$ and $x$).

Let $E_\sigma$, $\sigma\in\Gamma$, be the set of all irreducible components of the
exceptional divisor $\cal {D}$ ($\Gamma$ is a $G$-set itself). For $\sigma$ and $\delta$
from $\Gamma$, let $m_{\sigma\delta}:=\nu_{\sigma}(h_x)$, where $\nu_{\sigma}$ is the
corresponding
divisorial valuation, $h_x$ is the germ defining the curvette at a point
$x\in E_{\delta}\cap {\stackrel{\circ}{\cal D}}$. One can show that the matrix $(m_{\sigma\delta})$
is minus the inverse matrix to the intersection matrix $(E_{\sigma}\circ E_{\delta})$
of the irreducible components of the exceptional divisor $\cal {D}$. For $i=1,\ldots,r$,
let $m_{\sigma i}:=m_{\sigma\delta}$, where $E_{\delta}$ is the component of $\cal {D}$
corresponding to the valuation $\nu_i$, i.e. either the component defining the valuation
$\nu_i$ if $\nu_i$ is a divisorial valuation (i.e. if $i\in I''$), or the component intersecting
the strict transform of the corresponding irreducible curve $C_i$ if $\nu_i$ is a curve valuation
(i.e. if $i\in I'$). Let
$\mm_{\sigma}:=(m_{\sigma 1},\ldots, m_{\sigma r})\in \Z_{\ge 0}^r$,
$M_{\sigma i}:=\sum_{a\in G}m_{(a\sigma)i}$,
$\MM_{\sigma}:=(M_{\sigma 1},\ldots,
M_{\sigma r})=\sum_{a\in G}\mm_{a\sigma}$.

Let $\widehat{\cal D}$ be the quotient ${\stackrel{\circ}{\cal D}}/G$ and let
$p: {\stackrel{\circ}{\cal D}} \to \h{\cal D}$ be the factorization map.
Let $\{\Xi\}$ be a stratification of the smooth curve
$\widehat{\cal D}$ such that:
\begin{enumerate}
\item[1)] each stratum $\Xi$ is connected;
\item[2)] for each point $\h{x}\in \Xi$ and for each point $x$
from its pre-image $p^{-1}(\h{x})$, the conjugacy class of the
isotropy subgroup $G_x$ of the point $x$ is the same, i.e., depends only on
the stratum $\Xi$.
\end{enumerate}
The condition 2)
is equivalent to say that the factorization map
$p: {\stackrel{\circ}{\cal D}} \to \h{\cal D}$ is a (non-ramified) covering
over each stratum $\Xi$.
The condition 1) implies that the inverse image in ${\stackrel{\circ}{\cal D}}$
of each stratum $\Xi$ lies in the orbit of one component $E_\sigma$ of the
exceptional divisor. The element $\MM_{\sigma}\in \Z_{\ge 0}^r$ depends only on the stratum $\Xi$ and
will be denoted by $\MM_{\Xi}$.

For a point $x\in {\stackrel{\circ}{\cal D}}$, let
$\widetilde{X}_x = [G/G_x]_{\alpha_{h_x}}\in \widetilde{A}(G)$.
The equipped $G$-set $\widetilde{X}_x$ is one and the same for all points $x$ from
the preimage of a stratum $\Xi$ and therefore it defines an element of $\widetilde{A}(G)$ which we shall
denote by $[G/G_{\Xi}]_{\alpha_{\Xi}}$.
In \cite[Theorem 1]{Equiv3} it was shown that
 \begin{equation}\label{ACampo}
 P^G_{\{\nu_i\}} (\tt) =\prod_{\Xi}
\left(1-\tt^{\MM_{\Xi}}\right)^{-\chi(\Xi)[G/G_{\Xi}]_{\alpha_{\Xi}}}\,.
 \end{equation}

\section{Topology of plane valuations}\label{sec3}

Let the complex plane $(\C^2, 0)$ be endowed by a faithful linear $G$-action and let $\{\nu_i\}_{i=1}^r$
be a collection of divisorial valuations on $\OO_{\C^2,0}$.

\begin{theorem}\label{theo1}
 The $G$-equivariant Poincar\'e series $P^G_{\{\nu_i\}}(\tt)$ of the collection $\{\nu_i\}$ of divisorial
 valuations determines the weak $G$-equivariant topology of this collection.
\end{theorem}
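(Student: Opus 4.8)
The plan is to reverse-engineer the $G$-resolution graph $\Gamma^G$ of the collection $\{\nu_i\}$ (together with the correspondence between tails emerging from special points of the first component and those special points) from the formula~\eqref{ACampo}, and then invoke the analogue of Theorem~2.9 of \cite{Ark} quoted in Section~\ref{sec1}, which says that this combinatorial datum determines the weak $G$-topology. Since the factorization~\eqref{ACampo} expresses $P^G_{\{\nu_i\}}(\tt)$ as a (convergent, in the $\tt$-adic sense) infinite product indexed by the strata $\Xi$ of $\h{\DD}$, the first step is to observe that this factorization is essentially unique: grouping the exponents $-\chi(\Xi)[G/G_\Xi]_{\alpha_\Xi}$ by the monomial $\tt^{\MM_\Xi}$, one recovers, for each $\mm\in\Z_{\ge 0}^r$, the element $\sum_{\MM_\Xi=\mm}\chi(\Xi)[G/G_\Xi]_{\alpha_\Xi}\in\w A(G)$. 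Thus from the Poincar\'e series we read off a function $\mm\mapsto v(\mm)\in\w A(G)$ supported on finitely many $\mm$.

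The second and main step is to decode the geometry of the resolution from the numbers $\MM_\Xi$ and the equipped $G$-sets $[G/G_\Xi]_{\alpha_\Xi}$. The key input is that for divisorial valuations the exceptional components of $\DD$ come in $G$-orbits, and a component $E_\sigma$ of $\DD$ which actually carries one of the valuations $\nu_i$ (a \emph{dicritical} or \emph{rupture} component) produces, in the stratification of $\h\DD$, a top-dimensional stratum $\Xi$ with $\MM_\Xi$ a positive multiple of the coordinate directions recording the orbit structure, and an Euler characteristic $\chi(\Xi)$ that encodes whether $E_\sigma$ meets one, two or more other components or strict transforms (a generic component of $\P^1$ minus $k$ points has $\chi=2-k$). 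Running over all $\mm$ in the support and separating the contributions of curve-like strata ($\chi=+1$, coming from an open disc in some $E_\sigma$ minus nothing extra) from those of the punctured components, one reconstructs the list of orbits of components, the multiplicities $\MM_\sigma$, and hence — since $(m_{\sigma\delta})$ is minus the inverse of the intersection matrix, and $\MM_\sigma=\sum_{a\in G}\mm_{a\sigma}$ — the dual graph with its $G$-action, exactly as in the non-equivariant recovery of \cite{Duke,CDK} but now averaged over $G$. The equipped $G$-set attached to a stratum on the orbit of $E_\sigma$ has underlying $G$-set $G/G_\sigma$, so the isotropy of each orbit of components, and in particular which components meet special points of $E_1$, is visible; the associated one-dimensional representation $\alpha_\Xi$ pins down, up to the ambiguity already present in the notion of \emph{weak} $G$-topology, the correspondence between the tails of $\Gamma^G$ and the special points.

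**Main obstacle.** The delicate point — and the reason, as the introduction warns, that ``the proofs turn out to be much more involved'' than in \cite{Ark} — is that~\eqref{ACampo} only records the \emph{sum} $\sum_{\MM_\Xi=\mm}\chi(\Xi)[G/G_\Xi]_{\alpha_\Xi}$, so different strata with the same multiplicity vector $\mm$ (in particular strata on different orbits of components that happen to share $\MM_\Xi$, or on the same component but with $G$-sets differing by a representation) may partly cancel or merge in $\w A(G)$. One must show that such coincidences cannot destroy the recoverability of $\Gamma^G$: this requires exploiting the rigidity of the multiplicity vectors along a chain of infinitely near points (the $\mm_\sigma$ strictly increase along the resolution, and the proximity inequalities force a very constrained pattern of which $\mm$ occur), together with the fact that $\w A(G)$ is freely generated by the $[G/H]_\alpha$, so that cancellation can only occur between strata with \emph{conjugate} isotropy and \emph{opposite-sign} Euler characteristics. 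I would handle this by an induction on the resolution following the order of blow-ups: the first component $E_1$ and its special points are read off from the lowest-order monomials, and at each stage the already-reconstructed part of $\Gamma^G$ determines which $\MM_\Xi$ and $[G/G_\Xi]_{\alpha_\Xi}$ are ``expected'', leaving the contribution of the next orbit of components as the unique unexplained summand. The base case and the treatment of the exceptional situations flagged in the abstract (small cyclic $G$ acting trivially on $E_1$, very short resolution graphs) are checked by hand.
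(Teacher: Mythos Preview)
Your overall shape is right, but the content you defer to a vague ``induction following the order of blow-ups'' is precisely what the proof has to supply, and you do not supply it. The paper proceeds quite differently. First, it uses a \emph{projection formula} peculiar to divisorial valuations: setting $t_i=1$ for $i\notin I$ yields exactly $P^G_{\{\nu_i\}_{i\in I}}$, so the problem reduces to recovering the graph of each single $\nu_i$ from $P^G_{\nu_i}(t)$ and then the separation vertex of each pair from the two-variable series. You never make this reduction, and without it your induction on the full $r$-variable series has no clear well-ordering. Second --- and this is where your ``Main obstacle'' dissolves rather than being overcome by a proximity/rigidity argument --- for a \emph{single} divisorial valuation the integers $M_\delta$, $\delta\in\Gamma$, are pairwise distinct, so there is no cancellation across components at all. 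What remains is to sort the exponents into dead ends $\sigma_q$, rupture points $\tau_q$, splitting vertices $\rho_j$, and the divisorial vertex; the paper reads the subgroup chain $G=H_0\supset\cdots\supset H_k$ off the $A(G)$-multiplicities in the $\rho$-reduction $\rho P^G_\nu(t)$, and then inverts explicit recursions such as
\[
M_{\sigma_{q(\ell)}}=\vert H_\ell\vert\,m_{\sigma_{q(\ell)}}+\bigl(M_{\rho_\ell}-\vert H_\ell\vert\,m_{\rho_\ell}\bigr),\qquad
M_{\tau_q}=N_q M_{\sigma_q},
\]
to recover the semigroup generators $\overline\beta_q=m_{\sigma_q}$ one splitting level at a time. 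Your sketch has nothing playing this role; the sentence ``the already-reconstructed part of $\Gamma^G$ determines which $\MM_\Xi$ are expected, leaving the contribution of the next orbit of components as the unique unexplained summand'' is not an algorithm, because separating dead-end contributions from splitting-vertex and divisorial-vertex contributions is exactly what these formulas do.

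A smaller point: reading off valencies via $\chi(\Xi)=2-k$ is not how the argument runs (the paper works directly with the binomial exponents and their Burnside-ring multiplicities), and your parenthetical identifying ``dicritical'' with ``rupture'' components is a slip --- in $\Gamma$ these are different vertices ($\nu$ versus the $\tau_q$) with different roles. The final step, recovering the $G$-representation on $\C^2$ and the tail--special-point correspondence from the one-dimensional representations $\alpha_\Xi$, is as you indicate, and the paper carries it out by reducing to cyclic subgroups $\langle g\rangle\subset G$.
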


\begin{proof}
One has to use the following ``projection formula''. Let $I=\{i_1,\ldots, i_s\}$ be a subset of the set
$\{1, \ldots, r\}$ of the indices numbering the valuations. Then one has
$$
P^G_{\{\nu_i\}_{i\in I}}(t_{i_1},\ldots, t_{i_s})=
P^G_{\{\nu_i\}_{i=1}^r}(t_1,\ldots, t_r)_{\vert t_i=1\text{\ for\ }i\notin I}\,,
$$
i.e. the ($G$-equivariant) Poincar\'e series for a subcollection of valuations is obtained from the one
for the whole collection by substituting $t_i$  by $1$ for all $i$ numbering the valuations
which do not participate in the subcollection. (This equation is not valid for other types of valuations,
say, for curve ones: see the proof of Theorem~\ref{curve_top}).
The projection formula implies, in particular, that the $G$-equivariant Poincar\'e series $P^G_{\{\nu_i\}}(\tt)$ of
a collection of divisorial valuations determines the $G$-equivariant Poincar\'e series (in one variable) of
each individual valuation from it.

First we shall show that the Poincar\'e series $P^G_{\{\nu_i\}}(\tt)$ determines
the $G$-resolution graph of the collection of valuations.
It turns out that the necessary information about the $G$-equivariant resolution graph
can be restored from the $\rho$-reduction $\rho P^G_{\nu}(t)$ of the $G$-equivariant
Poincar\'e series $P^G_{\nu}(t)$ (i.e. the series from $A(G)[[t]]$ obtained by forgetting the one-dimensional
representations associated with the $G$-orbits). Therefore we shall start with considering it.

First let us prove the statement for one divisorial valuation.
The dual graph $\Gamma^G$ of the minimal $G$-equivariant resolution of a divisorial valuation $\nu$ looks like
in Fig.~\ref{fig1}. This means the following.

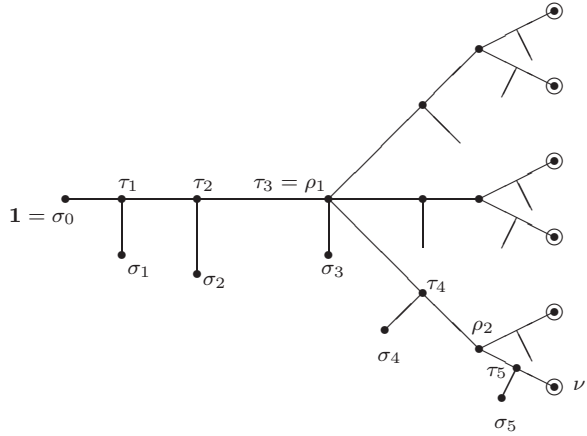
\begin{figure}
$$
\unitlength=0.50mm
\begin{picture}(120.00,90.00)(0,-30)
\thinlines
\put(-10,30){\line(1,0){70}}
\put(60,30){\circle*{2}}

\put(60,30){\line(1,1){40}}
\put(85,55){\line(1,-1){10}}
\put(85,55){\circle*{2}}
\put(100,70){\circle*{2}}

\put(100,70){\line(2,1){20}}
\put(120,80){\circle{4}}
\put(120,80){\circle*{2}}
\put(110,75){\line(1,-2){4}}
\put(100,70){\line(2,-1){20}}
\put(120,60){\circle{4}}
\put(120,60){\circle*{2}}
\put(110,65){\line(-1,-2){4}}

\put(60,30){\line(1,-1){40}}
\put(100,-10){\circle*{2}}
\put(85,5){\line(-1,-1){10}}
\put(85,5){\circle*{2}}
\put(86,6){{\scriptsize$\tau_4$}}
\put(75,-5){\circle*{2}}
\put(73,-13){{\scriptsize$\sigma_4$}}

\put(98,-5){{\scriptsize$\rho_2$}}
\put(100,-10){\line(2,1){20}}
\put(120,0){\circle{4}}
\put(120,0){\circle*{2}}
\put(110,-5){\line(1,-2){4}}
\put(100,-10){\line(2,-1){20}}
\put(120,-20){\circle{4}}
\put(120,-20){\circle*{2}}
\put(125,-21){{\scriptsize$\nu$}}
\put(110,-15){\line(-1,-2){4}}
\put(102,-17){{\scriptsize$\tau_5$}}
\put(110,-15){\circle*{2}}
\put(106,-23){\circle*{2}}
\put(104,-31){{\scriptsize$\sigma_5$}}

\put(60,30){\line(1,0){40}}
\put(85,30){\line(0,-1){13}}
\put(85,30){\circle*{2}}
\put(100,30){\circle*{2}}

\put(100,30){\line(2,1){20}}
\put(120,40){\circle{4}}
\put(120,40){\circle*{2}}
\put(110,35){\line(1,-2){4}}
\put(100,30){\line(2,-1){20}}
\put(120,20){\circle{4}}
\put(120,20){\circle*{2}}
\put(110,25){\line(-1,-2){4}}

\put(-10,30){\circle*{2}}
\put(5,30){\line(0,-1){15}}
\put(5,30){\circle*{2}}
\put(5,15){\circle*{2}}
\put(25,30){\line(0,-1){20}}
\put(25,30){\circle*{2}}
\put(25,10){\circle*{2}}
\put(60,30){\line(0,-1){15}}
\put(60,15){\circle*{2}}

\put(-25,24){{\scriptsize ${\bf 1}=\sigma_0$}}
\put(6.5,10){{\scriptsize$\sigma_1$}}
\put(26.5,7){{\scriptsize$\sigma_2$}}
\put(58,10){{\scriptsize$\sigma_3$}}
\put(4,33){{\scriptsize$\tau_1$}}
\put(24,33){{\scriptsize$\tau_2$}}
\put(40,33){\scriptsize{$\tau_3=\rho_1$}}
\end{picture}
$$
\caption{The dual equivariant resolution graph $\Gamma^G$ of the valuation $\nu$.}
\label{fig1}
\end{figure}

\begin{figure}
$$
\unitlength=0.50mm
\begin{picture}(120.00,40.00)(0,10)
\thinlines
\put(-10,30){\line(1,0){70}}
\put(60,30){\circle*{2}}

\put(82,33){{\scriptsize$\tau_4$}}
\put(82,12){{\scriptsize$\sigma_4$}}

\put(98,33){{\scriptsize$\rho_2$}}
\put(124,29){{\scriptsize$\nu$}}
\put(107,33){{\scriptsize$\tau_5$}}
\put(107,16){{\scriptsize$\sigma_5$}}

\put(60,30){\line(1,0){60}}
\put(85,30){\line(0,-1){13}}
\put(85,30){\circle*{2}}
\put(85,17){\circle*{2}}
\put(100,30){\circle*{2}}

\put(120,30){\circle{4}}
\put(120,30){\circle*{2}}
\put(110,30){\line(0,-1){8}}
\put(110,30){\circle*{2}}
\put(110,22){\circle*{2}}

\put(-10,30){\circle*{2}}
\put(5,30){\line(0,-1){15}}
\put(5,30){\circle*{2}}
\put(5,15){\circle*{2}}
\put(25,30){\line(0,-1){20}}
\put(25,30){\circle*{2}}
\put(25,10){\circle*{2}}
\put(60,30){\line(0,-1){15}}
\put(60,15){\circle*{2}}

\put(-25,24){{\scriptsize ${\bf 1}=\sigma_0$}}
\put(6.5,10){{\scriptsize$\sigma_1$}}
\put(26.5,7){{\scriptsize$\sigma_2$}}
\put(58,10){{\scriptsize$\sigma_3$}}
\put(4,33){{\scriptsize$\tau_1$}}
\put(24,33){{\scriptsize$\tau_2$}}
\put(50,33){\scriptsize{$\tau_3=\rho_1$}}
\end{picture}
$$
\caption{The dual resolution graph $\Gamma$ of the valuation $\nu$.}
\label{fig2}
\end{figure}
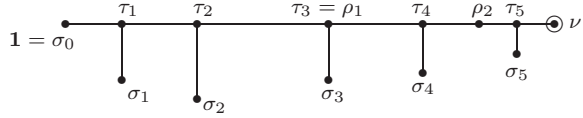

The standard (non-equivariant, minimal) dual resolution graph $\Gamma$ of the valuation $\nu$
looks like in Fig.~\ref{fig2}. The vertices
$\sigma_q$, $q=0, 1, \ldots, g$, are the dead ends
of the graph ($g$ is the number of the Puiseux pairs of a curvette corresponding to the valuation,
$\sigma_0={\bf 1}$ is the first component of the exceptional divisor), the vertices
$\tau_i$, $q=1, \ldots, g$, are the rupture points, the vertex $\nu$ corresponds to the divisorial
valuation under consideration. (The vertex $\nu$ may coincide with $\tau_g$.)
The set of vertices of the graph $\Gamma$ is ordered according to the order of the birth of
the corresponding components of the exceptional divisor. On $[\sigma_0, \nu]$
(the geodesic from $\sigma_0={\bf 1}$ to $\nu$) this order is the natural one:
$\delta_1<\delta_2$ if and only if the vertex $\delta_1$ lies on
$[\sigma_0,\delta_2]$.

The integers $m_{\sigma_q}$, $q=0,1,\ldots, g$, form the minimal set of generators of the
semigroup of values of $\nu$ and are traditionally denoted by $\overline{\beta}_q$.
One also uses the following notations.
$e_q:=\gcd(\overline{\beta}_0, \overline{\beta}_1, \ldots, \overline{\beta}_q)$,
$$
N_q:=\frac{e_{q-1}}{e_q}\,\left(=\frac{m_{\tau_q}}{m_{\sigma_q}}\right)\,.
$$

The graph $\Gamma^G$ of the minimal $G$-equivariant resolution consists of $\vert G\vert$ copies of
graph $\Gamma$ (numbered by the elements of $G$) glued together.
The gluing is defined by a sequence
$$
G=H_0\supset H_1\supset H_2\supset\ldots\supset H_k
$$
of subgroups of the group $G$ such that all $H_i$ with $i>0$ are abelian and $H_k$ is the isotropy group of
the valuation $\nu$ ($\{a\in G: a^*\nu=\nu\}$) and by a sequence by vertices $\rho_1$, \dots, $\rho_k$ of the
graph $\Gamma$ such that all of them lie on the geodesic from $\sigma_0$ to $\nu$, $\rho_1<\rho_2<\ldots<\rho_k$.
(Some of he vertices $\rho_i$ may coincide with some of the vertices $\tau_j$; the vertex $\rho_1$
may coincide with the initial vertex $\sigma_0={\mathbf{1}}$.)
The copies of $\Gamma$ numbered by the elements $a_1$ and $a_2$ from $G$ are glued along the part preceeding
$\rho_{\ell}$ (i.e., by identifying all the vertices smaller or equal to $\rho_{\ell}$) if
$a_1a_2^{-1}\in H_{\ell-1}$. (In particular the initial vertices $\sigma_0={\bf 1}$ of all the copies
are identified.) For $q=1, 2, \ldots, g$, let $j(q)$ be defined by the condition
$\rho_{j(q)}<\tau_q\le \rho_{j(q)+1}$.

For $\delta\in\Gamma^G$ (or for the corresponding $\delta\in\Gamma$), let
$M_{\delta}:=\sum_{a\in G} m_{a\delta}$. One can easily see that all the integers $M_{\delta}$, $\delta\in\Gamma$,
are different. (One has $M_{\delta_1}=M_{\delta_2}$ for $\delta_1$ and $\delta_2$ from $\Gamma^G$
if and only if there exists $a\in G$ such that $\delta_2=a\delta_1$.)
One has $M_{\tau_q}=N_qM_{\sigma_q}$.

The series $\rho P^G_{\nu}(t)$ is given by the equation
\begin{eqnarray*}
\rho P^G_{\nu}(t)&=&\prod_{q=0}^g \left(1-t^{M_{\sigma_q}}\right)^{-[G/H_{j(q)}]}\cdot
\prod_{q=1}^g \left(1-t^{N_qM_{\sigma_q}}\right)^{[G/H_{j(q)}]}\times\\
&\times&
\prod_{j=1}^{\ell} \left(1-t^{M_{\rho_j}}\right)^{[G/H_j]-[G/H_{j-1}]}\cdot
\left(1-t^{M_{\nu}}\right)^{-[G/H_k]}\,.
\end{eqnarray*}

The fact that all the integers $M_{\delta}$ are different implies that the exponents $M_{\sigma_q}$,
$q=1, \ldots, g$,
are among those which participate in the decomposition of the series $\rho P^G_{\nu}(t)$ with
negative cardinalities of the multiplicities. (The multiplicity of a binomial $(1-t^m)^{s_m}$, $s_m\in A(G)$,
is $s_m$. Its cardinality is the (virtual) number of the points of it.) It is possible that the exponents
of this sort include also $M_{\nu}$ corresponding to the divisorial valuation itself.

The subgroups $H_1\supset H_2\supset\ldots\supset H_k$ are defined by the multiplicities of all the factors
in the decomposition of the series $\rho P^G_{\nu}(t)$ into the product of the binomials.

The vertex $\sigma_0={\mathbf{1}}$ coincides with $\rho_1$ if and only if the binomial with the smallest exponent
in the decomposition of the series $\rho P^G_{\nu}(t)$
has a non-negative cardinality of the multiplicity. For $\sigma_q\le\rho_1$ one has
$M_{\sigma_q}=\vert G\vert m_{\sigma_q}$ and $M_{\rho_1}=\vert G\vert m_{\rho_1}$. These equations
give all the generators $\overline{\beta}_q$ of the semigroup of values with $\sigma_q\le\rho_1$ and also
$m_{\rho_1}$.

For $\ell\ge 1$, let $\sigma_{q(\ell)}$ be the minimal dead end greater than $\rho_{\ell}$
(i.e. there are the dead ends $\sigma_{q(\ell)}$, \dots, $\sigma_{q(\ell+1)-1}$ inbetween
$\rho_{\ell}$ and $\rho_{\ell+1}$). Let us consider the dead ends $\sigma_q$ such that
$\rho_1<\sigma_q<\rho_2$. One has
$$
M_{\sigma_{q(1)}}=\vert H_1\vert m_{\sigma_{q(1)}} + (\vert G\vert - \vert H_1\vert)m_{\rho_1}
=
\vert H_1\vert m_{\sigma_{q(1)}} + (M_{\rho_{1}} - \vert H_{1}\vert m_{\rho_{1}})\,.
$$
The smallest multiple of the exponent $M_{\sigma_{q(1)}}$ in a binomial participating in the
decomposition of the series $\rho P^G_{\nu}(t)$ is $M_{\tau_{q(1)}}=N_{q(1)}M_{\sigma_{q(1)}}$. Further,
for $\rho_1 < \sigma_{q(1)} < \sigma_{q(1)+1} < \sigma_{q(1)+2} < \cdots \sigma_{q(2)-1} < \rho_2$,
one has
\begin{eqnarray*}
M_{\sigma_{q(1)+1}}&=&\vert H_1\vert m_{\sigma_{q(1)+1}} +
(M_{\rho_{1}} - \vert H_{1}\vert m_{\rho_{1}})N_{q(1)}\,,\\
M_{\sigma_{q(1)+2}}&=&
\vert H_1\vert m_{\sigma_{q(1)}+2} +
(M_{\rho_{1}} - \vert H_{1}\vert m_{\rho_{1}})N_{q(1)}N_{q(1)+1}\,,\\
{\ }&{\ }&\ldots\\
M_{\rho_2}&=&
\vert H_1\vert m_{\rho_2} +
(M_{\rho_{1}} - \vert H_{1}\vert m_{\rho_{1}})N_{q(1)}N_{q(1)+1}\cdot\ldots\cdot N_{q(2)-1}\,.
\end{eqnarray*}
These equations
give all the generators $\overline{\beta}_q$ of the semigroup of values with $\sigma_q<\rho_2$ and also
$m_{\rho_2}$.

Assume that we have determined all the exponents $m_{\sigma_{q}}$ for $q<q(\ell)$ and also the exponent
$m_{\rho_{\ell}}$. Let us consider the dead ends $\sigma_q$ such that $\rho_{\ell}<\sigma_q<\rho_{\ell+1}$.
One has
\begin{eqnarray*}
M_{\sigma_{q(\ell)}}&=&\vert H_{\ell}\vert m_{\sigma_{q(\ell)}} +
(M_{\rho_{\ell}} - \vert H_{\ell}\vert m_{\rho_{\ell}})\,,\\
M_{\sigma_{q(\ell)+1}}&=&\vert H_{\ell}\vert m_{\sigma_{q(\ell)+1}} +
(M_{\rho_{\ell}} - \vert H_{\ell}\vert m_{\rho_{\ell}})N_{q(\ell)}\,,\\
M_{\sigma_{q(\ell)+2}}&=&\vert H_{\ell}\vert m_{\sigma_{q(\ell)+2}} +
(M_{\rho_{\ell}} - \vert H_{\ell}\vert m_{\rho_{\ell}})N_{q(\ell)}N_{q(\ell)+1}\,,\\
{\ }&{\ }&\ldots\\
M_{\rho_{\ell+1}}&=&
\vert H_{\ell}\vert m_{\rho_{\ell}} +
(M_{\rho_{\ell}} - \vert H_{\ell}\vert m_{\rho_{\ell}})N_{q(\ell)}N_{q(\ell)+1}\cdot\ldots\cdot N_{q(\ell+1)-1}\,.
\end{eqnarray*}
These equations
give all the generators $m_{\sigma_{q}}$ of the semigroup of values with $q<q(\ell+1)$ and also
$m_{\rho_{\ell+1}}$.

The described procedure recovers $m_{\sigma_q}$ for all $q\le g$.
If, in the binomials of the decomposition of the series $\rho P^G_{\nu}(t)$, there are no exponents
proportional to $M_{\sigma_g}$,
one has $\nu=\tau_g$ and the resolution graph $\Gamma$ is determined by the semigroup
$\langle\overline{\beta}_0\overline{\beta}_1,\ldots,\overline{\beta}_g\rangle$.
Otherwise the described above procedure permits to determine the exponents $m_{\rho_j}$ with
$\rho_j\ge \tau_g$ and $m_{\nu}$. This gives the $G$-equivariant resolution graph of one divisorial valuation.
\smallskip

Assume that we have a collection $\{\nu_i\}$ of divisorial valuations, $i=1,2,\ldots, r$.
To restore the equivariant resolution graph $\Gamma^G$ of the collection from the resolution graphs
of each individual valuation $\nu_i$, one has to determine the separation point $\delta_{ij}$ between each
two valuations $\nu_i$ and $\nu_j$ (for simplicity let us assume that $i=1$, $j=2$).
Let
\begin{equation}\label{two_branches}
\rho P^G_{\nu}(t_1, t_2, 1,\ldots,1)= \prod (1-t_1^{M_1}t_2^{M_2})^{s_{M_1M_2}}\,,
\end{equation}
$s_{M_1M_2}\in\Z$, be the decomposition into the product of the binomials.
The separation point $\delta_{12}$ corresponds to the maximal exponent in the decomposition (\ref{two_branches})
with
$$
\frac{M_{\delta 1}}{M_{\delta 2}}=\frac{M_{\sigma_0 1}}{M_{\sigma_0 1}}\,.
$$
This proves that the reduction $\rho P^G_{\{\nu_i\}}(\tt)\in A(G)[[t_1, \ldots, t_r]]$ of the $G$-equivariant
Poincar\'e series $P^G_{\{\nu_i\}}(\tt)$ determines the minimal
$G$-resolution graph of the set $\{\nu_i\}$ of divisorial valuations.

In order to prove that one can also determine the weak $G$-topology of the collection of valuations,
one has to show how is it possible to restore the representation of the group $G$ on $\C^2$ and the
correspondence between (some) tails of the (minimal) $G$-resolution graph and the special points on the
first component of the exceptional divisor. For that one should use the non-reduced Poincar\'e series
$P^G_{\{\nu_i\}}(\tt)\in\widetilde{A}(G)[[t_1, \ldots, t_r]]$ itself.
(If there are no special points on the first component of the exceptional divisor (this can happen only if
$G$ is cyclic), only the representation of $G$ on $\C^2$ has to be determined.)
We follow the scheme described in \cite{Ark}.

Let us consider the case of an abelian group $G$ first. If there are no special points
on the first component $E_{\bf 1}$ of the exceptional divisor, all points of $E_{\bf 1}$
are fixed with respect to the group $G$, the group $G$ is cyclic
and the representation is a scalar one. This (one dimensional)
representation is dual to the representation of the group $G$ on
the one-dimensional space generated by any linear function. The
case when there are no more components in ${\cal D}$, i.e. if the
resolution is achieved by the first blow-up, is trivial.
Otherwise let us consider a maximal component $E_{\sigma}$ among those components $E_{\tau}$
of the exceptional divisor for which $G_{\tau}=G$ and the corresponding curvette is smooth.
(The last condition can be easily detected from the resolution graph.)
The smooth part $\stackrel{\bullet}{E}_{\sigma}$ of this component contains a special point $x$
with $G_x=G$ (or all the points of $\stackrel{\bullet}{E}_{\sigma}$ are such that $G_x=G$).
The point(s) from $\stackrel{\bullet}{E}_{\sigma}$ with $G_x=G$ bring(s) into the decomposition
of the Poincar\'e series $P^G_{\{\nu_i\}}(\tt)$
the factor of the form $(1-t^M)^{-[G/G]_{\alpha}}$. The ($G$-equivariant) curvette $L$ at the described
special point of the divisor is smooth. Therefore the representation of $G$ on the
one-dimensional space generated by a $G$-equivariant equation of $L$ coincides with
the representation on the space generated by a linear function.
Let us take all factors of the form $(1-t^{\MM})^{-[G/G]_{\alpha}}$ in the decomposition of the Poincar\'e series
$P^G_{\{v_i\}}$. For each of them, the
exponent $\MM$ determines the corresponding component of the exceptional divisor and
therefore the topological type of the corresponding curvettes. The factor which corresponds to a component with a smooth curvette gives us the representation $\alpha$ on the space generated by a linear function.

Now assume that there are two special points on the first component of the resolution.
Without loss of
generality we can assume that they correspond to the coordinate axis $\{x=0\}$ and
$\{y=0\}$. The representation of the group $G$ on $\C^2$ is defined by its action on
the linear functions $x$ and $y$. For each of them this action can be recovered from
a factor of the form described above just in the same way.
Moreover, a factor, which determines the action of the group $G$ on the function $x$,
corresponds to a component of the exceptional divisor from the tail emerging from the point $\{x=0\}$.

Now let $G$ be an arbitrary (not necessarily abelian) group. For an element $g\in
G$ consider the action of the cyclic group $\langle g\rangle$ generated by $g$ on
$\C^2$. One can see that the $G$-equivariant Poincar\'e series $P^{G}_{\{v_i\}}(\tt)$
determines the $\langle g\rangle$-Poincar\'e series
$P^{\langle g\rangle}_{\{v_i\}}(\tt)$ just like in \cite[Proposition 2]{Equiv2}. This
implies that the $G$-equivariant Poincar\'e series determines the representation of the
subgroup $\langle g\rangle$. (Another way is to repeat the arguments above adjusting
them to the subgroup $\langle g\rangle$.) Therefore the $G$-Poincar\'e series
$P^{G}_{\{v_i\}}(\tt)$ determines the value of the character
of the $G$-representation on $\C^2$ for each element $g\in G$ and thus the
representation itself.
Special points of the $G$-action on the first component $E_{\bf 1}$ of the
exceptional divisor correspond to some abelian subgroups $H$ of $G$. For each such
subgroup $H$ there are two special points corresponding to different one-dimensional
representations of $H$. Again the construction above for an abelian group permits to
identify tails of the dual resolution graph with these two points.
\end{proof}


Let $\{C_i\}$, $i=1, \ldots, r$, be a collection of irreducible curve singularities in
$(\C^2, 0)$ such that it does not contain curves from the same $G$-orbit and it does not contain a
smooth curve invariant with respect to a non-trivial element of $G$ whose action on $\C^2$ is not a
scalar one. Let $\{\nu_i\}$ be the corresponding collection of valuations.
Let $G_i\subset G$ be the isotropy group of the branch $C_i$, $1\le i\le r$.

\begin{theorem}\label{curve_top}
The $G$-equivariant Poincar\'e series $P^G_{\{\nu_i\}}(\tt)$ of the collection $\{\nu_i\}$ determines
the weak $G$-equivariant topology of the collection $\{\nu_i\}$ of curve valuations.
\end{theorem}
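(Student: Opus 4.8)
The plan is to follow the proof of Theorem~\ref{theo1} step by step, the two substantive changes being that (a) the geodesics of the $G$-resolution graph $\Gamma^G$ now terminate in arrowheads (the strict transforms $\widetilde C_i$) rather than in distinguished divisor vertices, and (b) the clean projection formula $t_j=1$ is no longer available and must be replaced by an equivariant Torres-type relation. As in Theorem~\ref{theo1}, the argument has two parts: first recover $\Gamma^G$ of the collection $\{C_i\}$ from the reduction $\rho P^G_{\{\nu_i\}}(\tt)\in A(G)[[t_1,\ldots,t_r]]$, then recover the linear $G$-action on $\C^2$ and the matching of the tails of $\Gamma^G$ with the special points of $E_{\mathbf 1}$ from the full series $P^G_{\{\nu_i\}}(\tt)\in\w{A}(G)[[t_1,\ldots,t_r]]$.

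For a single branch $C_i$ the minimal $G$-resolution graph has the same shape as in Fig.~\ref{fig1}: a geodesic from $\sigma_0=\mathbf 1$ to the last created component $E_{\delta_i}$ carrying the arrowhead $\widetilde C_i$, with dead ends $\sigma_q$, rupture points $\tau_q$, and a descending chain $G=H_0\supset H_1\supset\cdots\supset H_k=G_i$ prescribing the gluing of the $\abs G$ copies of the non-equivariant graph. Since \eqref{ACampo} applies to curve valuations as well, $\rho P^G_{\nu_i}(t)$ is again a product of binomials $(1-t^M)^{s_M}$, $s_M\in A(G)$, with pairwise distinct exponents $M_\delta=\sum_{a\in G}m_{a\delta}$; the ones occurring with negative cardinality of the multiplicity are the $M_{\sigma_q}$ together with possibly $M_{\delta_i}$, and the multiplicities themselves encode the subgroups $H_j$. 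The recursion used in the proof of Theorem~\ref{theo1} then recovers all the generators $\overline{\beta}_q$ of the semigroup of values of $C_i$ --- which for a plane branch determines its topological type --- and the gluing vertices $\rho_j$, so that the $G$-resolution graph of $\nu_i$ is determined.

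To carry this out starting from the collection series one cannot simply set $t_k=1$ for $k\neq i$: for curve valuations this removes the point of $\oD$ cut by $\widetilde C_k$ and hence changes the Euler characteristic of the relevant stratum, so the specialization equals $P^G_{\nu_i}(t)$ only up to an equivariant analogue of the Torres formula --- a product of binomials whose exponents are built from the intersection numbers $(C_i\circ aC_k)$, $a\in G$, and which, because $P^G_{\{\nu_i\}}(\tt)$ depends only on the $G$-orbits of the branches, involves only the orbit data of these numbers. I would first read off these intersection numbers, together with the separation points $\delta_{ik}$, from the bivariate specializations $\rho P^G_{\{\nu_i\}}(\tt)_{\vert t_j=1,\ j\neq i,k}$, exactly as the separation point is located in the divisorial proof --- as the maximal exponent with $M_{\delta i}/M_{\delta k}$ equal to its value $M_{\sigma_0 i}/M_{\sigma_0 k}$ at the first component --- then divide out the Torres correction, reducing to the single-branch recovery above, and finally glue the graphs $\Gamma^G_{\nu_i}$ along the separation points to obtain $\Gamma^G$. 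The hypotheses of the theorem enter here: excluding branches in the same $G$-orbit keeps the collection genuinely $r$-fold, and excluding a smooth curve invariant under a non-scalar element rules out the degenerate geodesic (just $\sigma_0$ and an arrowhead) on which the reduced series carries no information about where the arrowhead is attached.

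The last part --- recovering the linear $G$-action and the correspondence between the tails and the special points --- goes through as at the end of the proof of Theorem~\ref{theo1}: restriction to the cyclic subgroups $\langle g\rangle$ reduces matters to the abelian case, a factor $(1-t^M)^{-[G/H]_\alpha}$ coming from an exceptional component with a \emph{smooth} curvette (a condition visible in $\Gamma^G$) exhibits $\alpha$ as the representation of $H$ on the space of linear functions, and its exponent $M$ identifies the tail of $\Gamma^G$, hence the special point of $E_{\mathbf 1}$, to which that component belongs; the excluded degenerate curves are precisely those that would otherwise obstruct finding such a component or make the matching ambiguous, in analogy with the exception in \cite{Ark}. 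The step I expect to be the main obstacle is the bookkeeping of the equivariant Torres-type correction factor: checking that it is a product of binomials determined solely by the $G$-orbits of the intersection numbers $(C_i\circ aC_k)$ and can be cancelled cleanly, so that the single-branch analysis of $\rho P^G_{\nu_i}(t)$ genuinely applies --- the representation-theoretic part being essentially inherited from Theorem~\ref{theo1} and \cite{Ark}.
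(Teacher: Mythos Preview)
Your overall architecture matches the paper's: recover $\Gamma^G$ from $\rho P^G_{\{\nu_i\}}$, then recover the $G$-representation on $\C^2$ and the tail-to-special-point correspondence from the full series, with the last step carried over verbatim from Theorem~\ref{theo1}. The single-branch analysis is also essentially the same. The divergence is in how the several-branch combinatorics is handled, and there your route has a genuine gap.

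The paper does \emph{not} pass to bivariate specializations and glue. It works with the full $r$-variate factorization $\rho P^G_{\{\nu_i\}}(\tt)=\prod_{\sigma}(1-\tt^{\MM_\sigma})^{s_\sigma}$ and peels off one branch at a time. The precise projection (Torres) formula is
\[
P^G_{\{\nu_i\}}(\tt)\big|_{t_{i_0}=1}
=(1-\tt^{\MM_{\alpha_{i_0}}})^{[G/G_{i_0}]_{\alpha_{i_0}}}\big|_{t_{i_0}=1}\cdot
P^G_{\{\nu_i\}_{i\ne i_0}},
\]
so removing a single branch introduces exactly one correction binomial, whose exponent $\MM_{\alpha_{i_0}}$ is already visible in the $r$-variate factorization. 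The inductive step is: choose a maximal exponent $\MM_\sigma$ among those appearing, identify an index $i_0$ with $\alpha_{i_0}=\sigma$, strip that factor, and repeat with $r-1$ branches. The subtle point---absent from your sketch---is that several indices may compete for $\sigma$. The paper forms the set $A$ of indices $j$ for which $M_{\sigma j}/M_{\sigma k}\ge M_{\tau j}/M_{\tau k}$ for all $k$ and all $\tau$ with $s_\tau\ne 0$, proves that at most one $\ell\in A$ can satisfy $\alpha_\ell\ne\sigma$ (necessarily sitting at a dead end on the geodesic to $\sigma$), and singles out the correct $i_0$ by the condition $M_{\sigma i_0}\ge M_{\sigma j}$ for all $j\in A$.

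Your bivariate approach runs into the circularity you yourself flag but do not resolve. Specializing at $(i,k)$ gives $P^G_{\nu_i,\nu_k}(t_i,t_k)$ times $\prod_{j\ne i,k}(1-t_i^{M_{\alpha_j i}}t_k^{M_{\alpha_j k}})^{[G/G_j]}$; to strip these factors you need the $M_{\alpha_j i}$, which are precisely the orbit-intersection numbers you are trying to extract, and you give no mechanism for distinguishing these contaminating binomials from the genuine two-branch ones before you know them. The ``maximal exponent with base ratio'' criterion you import from Theorem~\ref{theo1} is applied there to a \emph{clean} bivariate series; here Torres contaminants may share the base ratio, and disentangling them would require exactly the ratio-monotonicity analysis the paper carries out in the $r$-variate setting anyway. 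The paper's peel-off avoids the circularity because each step removes one branch with one correction factor whose exponent is already in hand, and the identification of $i_0$ uses only $r$-variate data. A smaller point: the exclusion of a smooth branch invariant under a non-scalar element is used in the paper only at the representation/tail-matching stage (as in Theorem~\ref{theo1}), not for the graph recovery.
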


\begin{proof}
The minimal resolution graph $\Gamma$ of the plane curve singularity
$C=\bigcup\limits_{i=1}^r C_i$
is essentially the same as the graph of the divisorial valuations defined by the set of irreducible
components $\{E_{\alpha_i}\}$
of the exceptional divisor such that the strict transform of $C_i$ intersects the component
$E_{\alpha_i}$. Instead of the mark used  for the divisor $E_{\alpha_i}$
(like in Figures 1 and 2 for one valuation)
one puts an arrow corresponding to $C_i$ connected to the vertex $\alpha_i$. Notice that
there can be several arrows
connected to the same vertex, i.e. $\alpha_i=\alpha_j$ for diferent branches $C_i,C_j$.
In the case of one branch the graph looks like the one in Figure 2 but the vertex marked by $\nu$
coincides with
$\tau_{g}$ and there is an arrow connected with $\tau_g$.
The number $g$ is equal to the number of Puiseux pairs of the curve and
$m_{\sigma_i}=\bar{\beta}_i$, $0\le i\le g$, are the elements of the minimal set of generators of the
semigroup of the branch. (In particular they determine the minimal resolution graph of
the curve.)

\begin{figure}[h]
$$
\unitlength=0.50mm
\begin{picture}(120.00,40.00)(0,10)
\thinlines

\put(-40,30){\line(1,0){20}}
\put(-20,30){\circle*{2}}
\put(-20,30){\line(0,-1){15}}
\put(-23,34){{\scriptsize$\alpha_i$}}
\put(-20,30){\vector(1,1){10}}
\put(-30,5){{\scriptsize (a) $\Gamma$}}

\put(20,30){\line(1,0){20}}
\put(40,30){\circle*{2}}
\put(40,30){\line(0,-1){15}}
\put(40,30){\line(1,0){20}}
\put(60,30){\circle*{2}}
\put(57,34){{\scriptsize$\rho$}}
\put(60,30){\vector(1,1){10}}
\put(60,30){\vector(1,-1){10}}
\put(72,40){{\scriptsize $C_i$}}
\put(72,20){{\scriptsize $aC_i$}}
\put(40,5){{\scriptsize (b) $\Gamma^G$}}

\put(110,30){\line(1,0){20}}
\put(130,30){\circle*{2}}
\put(130,30){\line(0,-1){15}}
\put(130,30){\line(1,0){20}}
\put(150,30){\circle*{2}}
\put(143,24){{\scriptsize$\rho=\alpha_i$}}
\put(150,30){\vector(1,1){10}}
\put(162,40){{\scriptsize $C_i$}}
\put(120,5){{\scriptsize (c) $\Gamma$ enlarged}}

\end{picture}
$$
\caption{The graphs $\Gamma$, $\Gamma^G$ and $\Gamma$ enlarged.
}
\label{fig2}
\end{figure}
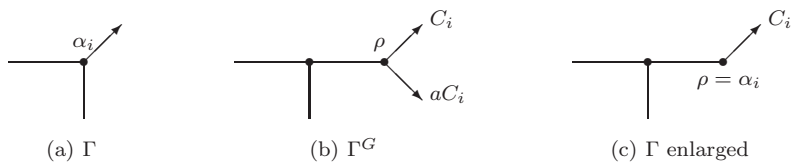

The same rules
apply for the graph $\Gamma^G$.
However $\Gamma^G$ corresponds to the
embedded resolution of the union of all the orbits of the branches of $C$. So,
it is possible that, in order to achieve the minimal equivariant resolution (i.e.
in order to separate all the conjugate
of each one of the branches $C_i$), one has to add some aditional blow-ups starting
in the point $\alpha_i$. Note that in this case some of the vertices $\rho$ (see the
notations in the proof of Theorem 1 and Figures 1 and 2) does not appear in $\Gamma$.
In order to preserve
the scheme and the notations from the proof of the case of divisorial valuations it
is better to enlarge  $\Gamma$ in such a way that the new one (also denoted by
$\Gamma$) is the minimal one in which all
the vertices $\rho$ are present (see Figure 3).
Note that $a E_{\alpha_i}=E_{a\alpha_i}$ for $a\in G$, so in this way the (new) resolution graph $\Gamma$
is just the quotient of $\Gamma^G$ by the obvious action of $G$ on $\Gamma^G$.

As in the case of divisorial valuations, for each $\delta\in \Gamma^G$
let $h_{\delta}=0$, $h_{\delta}\in \OO_{\C^2,0}$,
be the equation of a curvette at the component $E_{\delta}$,
$m_{\delta i}$ be the value $\nu_i(h_{\delta})$,
$M_{\delta i}= \sum_{a\in G} m_{(a\delta) i} = \sum_{a\in G}(a^*
\nu_i)(h_{\delta})$
and $\underbar{M}_{\delta}= (M_{\delta 1}, \ldots, M_{\delta r})\in \Z_{\ge 0}^r$.
All the $\MM_\sigma$, $\sigma\in \Gamma$, are different
and for $\sigma, \tau\in \Gamma^G$
$\MM_\sigma=\MM_\tau$
if and only if
$E_\tau = a E_\sigma$ for some $a\in G$.
Let $G_i\subset G$ be the isotropy group of the branch $C_i$, $1\le i\le r$.

For $i, j\in \{1,\ldots ,r\}$,
$m_{\alpha_i j}$ is just the intersection multiplicity between $C_i$ and $C_j$ and
$$
M_{\alpha_i j} = \sum_{a\in G} m_{(a \alpha_i) j} = \sum_{a\in G} (a^*\nu_j)(h_{\alpha_i}) =
(C_i, \bigcup_{a\in G} a C_j) =
(C_j, \bigcup_{a\in G} a C_i) =
M_{\alpha_j i}\; .
$$
In contrast with the case of divisorial valuations the projection formula
is different from the one for divisorial valuations formulated at the beginning of the proof
of Theorem 1.
Instead of it one has the following one:
For $i_0\in \{1,\ldots,r\}$ one has
\begin{equation}\label{projection}
P^{G}_{\{\nu_i\}}(\tt)_{|_{t_{i_0}=1}} =
(1-\tt^{\MM_{\alpha_{i_0}}})^{[G/G_{i_0}]_{\alpha_{i_0}}}_{|_{t_{i_0}=1}}
P^{G}_{\{\nu_i\}_{i\ne i_0}}(t_1,\ldots,t_{i_0 -1}, t_{i_0 +1}, \ldots,t_r)\,.
\end{equation}
(This can be easily deduced from (\ref{ACampo}).)
Using (\ref{projection}) repeatedly one also has:
\begin{equation}\label{projection2}
P^{G}_{\{\nu_i\}}(\tt)_{|_{t_i=1, i\neq i_0}} =
\prod_{i\neq i_0} (1- t_{i_0}^{M_{\alpha_i i_0}})^{[G/G_i]_{\alpha_i}} P^G_{\nu_{i_0}}(t_{i_0})\, .
\end{equation}

Equations (\ref{projection}) and (\ref{projection2}) imply
that in order to describe inductively the minimal
$G$-resolution graph $\Gamma^G$ one has to detect the binomial
$(1-\tt^{\MM_{\alpha_{i_0}}})$ corresponding to some $i_0$ from the $G$-equivariant Poincar\'e
series and also the intersection multiplicities of $C_{i_0}$ with the other branches
of $C$. As in the divisorial case, the necessary information about the $G$-equivariant resolution graph
can be restored from the $\rho$-reduction $\rho P^{G}_{\{\nu_i\}}(\tt)$ of the Poincar\'e series
$P^{G}_{\{\nu_i\}}(\tt)$ to the ring $A(G)[[\seq t1r]]$. From the factorization given in (\ref{ACampo})
one can write
$\rho P^G_{\{\nu_i\}}(\tt)= \prod_{\sigma\in \Gamma}
(1-\tt^{\underline{M}_{\sigma}})^{s_\sigma}$, where $s_\sigma\in A(G)$.
Note that the multiplicity $s_{\sigma}$ may be equal to zero, i.e. the binomial factor
corresponding to $\sigma$ may be absent.

The determination of the $G$-equivariant resolution graph from the series
$\rho P^{G}_{\nu}(t)$ for
one branch almost repeats the one described for one divisorial valuation, e.g. the semigroup is the same
as the one of the divisorial valuation defined by the component $E_{\tau_g}$ of the exceptional divisor.
So, let us assume $r>1$ and
let us fix $j,k\in \{1,\ldots,r\}$. The separation point
$s(\alpha_j,\alpha_k)\in
\Gamma^G$
of $\alpha_j$ and $\alpha_k$ is
defined by the condition
$[{\bf 1},\alpha_j]\cap [{\bf 1},\alpha_k]=[{\bf 1},s(\alpha_j,\alpha_k)]$. Here $[{\bf 1},\sigma]$ is
the geodesic in the dual graph $\Gamma^G$ joinning the first vertex ${\bf 1}$ with the vertex
$\sigma$. Now, let us define the separation vertex
$s(\alpha_j,k)$ of $C_j$ and $GC_k$
as the maximun of $s(\alpha_j,a \alpha_k)$ for $a\in G$. Note that, if $a\in G$ then
$s(a \alpha_j,k) = as(\alpha_j,k) \in \Gamma^G$ so
$s(j,k)=s(\alpha_j,k)$ is a well defined
vertex of the graph $\Gamma$. We
refer to it as the separation vertex of $C_i$ and $C_j$
in $\Gamma$.

The ratio $M_{\sigma j}/M_{\sigma k}$ is constant for $\sigma$ in $[{\bf 1},s(j,k)]$ and
is a strictly increasing function for $\sigma\in [s(i,j),\alpha_j]\subset \Gamma$ as well as in the
geodesic $[a s(j,k), a \alpha_j]\subset \Gamma^G$ for $a \in G$. Notice that for
$\sigma\notin \bigcup_{a\in G} \left([{\bf 1},a \alpha_j]\cup [{\bf 1},a \alpha_k] \right)$ the ratio
$M_{\sigma j}/M_{\sigma k}$ is equal to
$M_{\sigma' j}/M_{\sigma' k}$ where $\sigma'$ is the vertex such that
$$
[{\bf 1},\sigma'] = \max_{a\in G} \left\{
\left([{\bf 1},a \alpha_j]\cup [{\bf 1},a \alpha_k] \right)\cap [{\bf 1},\sigma]\right\}\,.
$$

Let $\sigma\in \Gamma$ be such that the exponent $\MM_{\sigma}$ is a maximal one
among the set of exponents $\MM_{\tau}$ appearing in the factorization
\begin{equation}\label{factor}
\rho P^{G}_{\{\nu_i\}}(\tt) = \prod_{\tau\in \Gamma\; , \; s_{\tau}\neq 0}
(1-\tt^{\MM_\tau})^{s_{\tau}} \; .
\end{equation}
(Here we use the partial order
$\MM=(\seq M1r)\le \MM'=(\seq {M'}1r)$ if and only if $M_i\le M'_i$ for all
$i=1,\ldots,r$.) Note that in this case the corresponding factor has positive
cardinality and there exists an index $j\in \{1,\ldots,r\}$ such that
$\alpha_j=\sigma$.

Let $A\subset \{1,\ldots,r\}$ be the set of indices $j$ such that
$M_{\sigma j}/M_{\sigma k}\ge M_{\tau j}/M_{\tau k}$ for all $k\in\{1,\ldots,r\}$ and
all $\tau\in \Gamma^G$ such that  the binomial $(1-\tt^{\MM_{\tau}})$ appears in
(\ref{factor}), i.e. $s_{\tau}\neq 0$.
From the comments above it is clear that all indices $j$ such that $\alpha_j=\sigma$
belong to $A$, however $A$ could contain some other indices $\ell$ such that
$\alpha_{\ell}\neq \sigma$.

Let us assume that there exists $\ell\in A$ such that
$\alpha_{\ell}\neq \sigma$. The behaviour of the ratios $M_{\tau \ell}/M_{\tau k}$
along $[1,\alpha_{\ell}]$ described above implies that $\sigma\in [1,\alpha_\ell]$.
By definition of the set $A$, for any $\tau\in [\sigma, \alpha_{\ell}]$,
$\tau\neq\sigma$, the binomial $(1-\tt^{\MM_{\tau}})$ does not appear in
(\ref{factor}), i.e. $s_{\tau}=0$, in particular
$\chi(\overset{\circ}{E}_{\tau})=0$.
As a consequence, $\alpha_{\ell}<\sigma$ and
$\alpha_{\ell}$ is the end point $\sigma_g$ on the dual graph of $C_j$ (here $j\in A$
such that $\alpha_j=\sigma$). In this case one has
$M_{\sigma \ell} < M_{\sigma j}$ and one can distinguish $\ell$ by this condition.
Note that if such an $\ell\in A$ exists then it is unique.

Let $i_0\in A$ be such that $M_{\sigma i_0}\ge M_{\sigma j}$ for all $j\in A$. Then
$\alpha_{i_0}=\sigma$ and the factor
$(1-\tt^{\MM_{\alpha_{i_0}}})^{[G/G_{i_0}]}$ appears in the factorization
(\ref{factor}). Thus, the projection formulae permits to recover the $G$-equivariant resolution graph by
induction.

As in Theorem~\ref{theo1} one has to show that the Poincar\'e series $P^{G}_{\{\nu_i\}}(\tt)$ determines
the representation of $G$ on $\C^2$, and the correspondence between ``tails" of the resolution graph.
The proof in this case does not differ
from the one made in Theorem~\ref{theo1} for
divisorial valuations since the collection $\{C_i\}$ does not contains smooth curves invariant with
respect to a non-trivial element of $G$ whose action is not a scalar one.
\end{proof}

Addresses:

A. Campillo and F. Delgado:
IMUVA (Instituto de Investigaci\'on en
Matem\'aticas), Universidad de Valladolid.
Paseo de Bel\'en, 7. 47011 Valladolid, Spain.
\newline E-mail: campillo\symbol{'100}agt.uva.es, fdelgado\symbol{'100}agt.uva.es

S.M. Gusein-Zade:
Moscow State University, Faculty of Mathematics and Mechanics, Moscow, GSP-1, 119991, Russia.
\newline E-mail: sabir\symbol{'100}mccme.ru

\end{document}